\theoremstyle{plain}
\newtheorem{theorem}{Theorem}[section]
\newtheorem{lemma}{Lemma}[section]
\newtheorem{proposition}{Proposition}[section]
\newtheorem{definition}{Definition}[section]
\newtheorem{corollary}{Corollary}[section]
\begin{document}

\title{Asymptotics for regression models under loss of identifiability}


\author{\textsc J. Rynkiewicz \\ \textit { SAMM, Universite Paris 1} }

\address{SAMM, Universit\'e de Paris 1}

\maketitle
\begin{abstract}
This paper discusses the asymptotic behavior of regression models under general conditions. First, we give a general inequality for the difference of the sum of square errors (SSE) of the estimated regression model and the SSE of the theoretical best regression function in our model. A set of  generalized derivative functions is a key tool in deriving such inequality. Under suitable Donsker condition for this set, we give the asymptotic distribution for the difference of SSE. We show how to get this Donsker property for parametric models even if the parameters characterizing the best regression function are not unique. This result is applied to neural networks regression models with redundant hidden units when loss of identifiability occurs. 
\end{abstract}



\section{introduction}
This paper discusses the asymptotic behavior of regression models under general conditions. Let $\mathcal F$ be the family of possible regression functions and suppose that we observe a random sample 
\[
(X_1Y_1),\cdots,(X_n,Y_n),
\] from the distribution $P$ of a vector $(X,Y)$, with $Y$ a real random variable, that follows the regression model
\begin{equation}
\label{Regression}
Y=f_0(X)+\varepsilon,\mbox{ }E\left(\varepsilon\left|X\right.\right)=0,\mbox{ }E\left(\varepsilon^2\left|X\right.\right)=\sigma^2<\infty.
\end{equation}
In our model, the function $f_0$ will be the best regression function among the set $\mathcal F$:
\[
f_0=\arg\min_{f\in\mathcal F}\Vert Y-f(X)\Vert_2,
\]
where \[
\Vert g(Z)\Vert_2:=\sqrt{\int g(z)^2dP(z)}
\]
is the ${\mathcal L}^2$  norm for an square integrable function $g$.

For simplicity, we assume that the best function $f_0$ is unique.

A natural estimator of $f_0$ is the least square estimator (LSE) $\hat f$ that minimizes the sum of square errors (SSE): 
\begin{equation}
\label{LSE}
\hat f=\arg\min_{f\in\mathcal F} \sum_{t=1}^n(Y_t-f(X_t))^2.
\end{equation}
$\hat f$ should be expected to converge to the function $f_0$ under suitable conditions.
If ${\mathcal F}$ is a parametric family and $\Theta$ is a set of possible parameters, ${\mathcal F}=\left\{f_\theta,\theta\in\Theta\right\}$, the LSE is the parameter $\hat \theta$ that minimizes
\begin{equation}
\label{ParamLSE}
\hat \theta= \arg\min_{\theta\in\Theta}\sum_{t=1}^n(Y_t-f_\theta(X_t))^2.
\end{equation}
Let us write $\Theta_0$ the set of parameters realizing the best regression function $f_0$: $\forall \theta\in\Theta_0$, $f_\theta=f_0$. 
If the set  ${\mathcal F}$ is large enough, it may be possible that the dimension of the interior of the set $\Theta_0$ is larger than zero and various difficulties arise in analyzing the statistical properties of estimators of $f_0$. This is for example the case if $\mathcal F$ contains multilayer neural networks with redundant hidden units (see \cite{Fukumizu2003}).  

Under loss of identifiability of the parameters, the asymptotics for likelihood functions has been studied by \cite{Liu} who improve the method of \cite{Dacunha} and\\
\cite{Dacunha2}. The authors establish a general quadratic approximation of the log-likelihood ratio in a neighborhood of the true density which is valid with or without loss of identifiability of the parameter of the true distribution. In this paper, we will use a similar idea, but here we are interested in regression functions, not in density functions, so we will introduce generalized derivative functions: 
\begin{equation}
\label{Generalizedderivative}
d_f(x)=\frac{f(x)-f_0(x)}{\Vert f(X)-f_0(X)\Vert_2},\, f\neq f_0.
\end{equation}

Under some general regularity conditions, this paper shows that 
\begin{equation}\label{CVSS}
\lim_{n\rightarrow\infty}\sum_{t=1}^n\left(Y_t-f_0(X_t)\right)^2-\left(Y_t-\hat f(X_t)\right)^2=\sigma^2\sup_{s\in\mathcal D}W_s^2
\end{equation} 
where $\mathcal D$ is the ${\mathcal L}^2$ limits of the generalized derivative function $d_f$ as $\Vert f(X)-f_0(X)\Vert_2\rightarrow 0$. 
Such result allows for example, to fully explicit the asymptotic behavior of the SSE when regression functions are multilayer neural networks, even if $\mathcal F$ is too big and contains neural networks with redundant hidden units. 
 
This result is a consequence of the very general inequality: For all regression function $f\in{\mathcal F}$, $f\neq f_0$,
\begin{equation}\label{ineg}
\sum_{t=1}^n\left(Y_t-f_0(X_t)\right)^2-\left(Y_t-f(X_t)\right)^2\leq\frac{\left(\frac{\sum_{t=1}^{n}\varepsilon_td_f(X_t)}{\sqrt{n}}\right)^2}{\frac{\sum_{t=1}^{n}\left(d_f(X_t)\right)^2}{n}}
\end{equation}

and the fact that the empirical process :
\begin{equation}\label{empiricalProcess}
\frac{1}{\sqrt n}\sum_{t=1}^n\varepsilon_td_f(X_t)
\end{equation}
converges in distribution to some Gaussian process. For instance, when ${\mathcal S}=\left\{d_f,f\in{\mathcal F},f\neq f_0\right\}$ is a Donsker class, $\frac{1}{\sqrt n}\sum_{t=1}^n\varepsilon_td_f(X_t)$ converges uniformly to some zero-mean Gaussian process.

Note that, even if the set $\mathcal F$ is a regular parametric family, the function $\theta\mapsto d_{f_\theta}(x)$ may be not extendable by continuity in $\theta_0\in\Theta_0$, hence the Donsker property of the set of generalized derivative functions has to be carefully studied. This problem occurs also for the generalized score functions $S_\theta$ of \cite{Liu}, although the authors did not mention it. 

The paper is organized as follows: Section 2 establishes  the asymptotic distribution of the SSE for regression models if the set of generalized derivative functions ${\mathcal S}$ is Donsker. In the next section, we show how to get the Donsker property for ${\mathcal S}$ in the parametric case but under loss of identifiability.  As an example, section 4 characterizes the asymptotic distribution of regression using neural networks with redundant hidden units. 

\section{Asymptotic distribution of the SSE}
For sake of simplicity we consider identically distributed independent variables, but all the following results can be easily generalized to geometrically mixing stationary sequence of random variables as in \\
\cite{Olteanu} or \cite{Gassiat}. For example, our results may be applied to non-linear autoregressive models using multilayer neural neural networks as in \cite{Yao}.  
Under fairly general condition the LSE is consistent and the regularity conditions of this  paper imply consistency, so the asymptotic distribution of SSE is determined by the local properties of the regression function in a small ${\mathcal L}^2$-neighborhood of the best regression function $f_0$. 

First we begin with some definitions.
\begin{definition}

The envelope function of a class of functions $\mathcal F$ is defined as 
\[F(x)\equiv \sup_{f\in \mathcal F}\left|f(x)\right|.
\]
 We will use the abbreviation $Pf=\int fdP$ for an integrable function $f$ and a probability measure $P$. A family of random sequences 
\[
\left\{Y_n(g), g\in{\mathcal G},n=1,2,\cdots\right\}
\]
is said to be uniformly $O_P(1)$ if for every $\delta>0$, there exist constants $M>0$ and $N(\delta,M)$ such that 
\[
P\left(\sup_{g\in\mathcal G}|Y_n(g)|\leq M\right)\geq 1-\delta
\]
for all $n\geq N(\delta,M)$.

A family of random sequences 
\[
\left\{Y_n(g), g\in{\mathcal G},n=1,2,\cdots\right\}
\]
is said to be uniformly $o_P(1)$ if for every $\delta>0$ and $\varepsilon>0$ there exists a constant $N(\delta,\varepsilon)$ such that 
\[
P\left(\sup_{g\in\mathcal G}|Y_n(g)|<\varepsilon\right)\geq 1-\delta
\]
for all $n\geq N(\delta,\varepsilon)$.
\end{definition}  
\subsection{Upper bound for the SSE} 
We prove this lemma which gives a very general upper bound for the sum of square errors.

\begin{lemma}\label{lemmeineg}
For all regression function $f\in{\mathcal F}$ with $f\neq f_0$:
\[
\sum_{t=1}^n\left(Y_t-f_0(X_t)\right)^2-\left(Y_t-f(X_t)\right)^2\leq\frac{\left(\frac{\sum_{t=1}^{n}\varepsilon_td_f(X_t)}{\sqrt{n}}\right)^2}{\frac{\sum_{t=1}^{n}\left(d_f(X_t)\right)^2}{n}}.
\]
\end{lemma}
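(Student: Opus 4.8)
The plan is to reduce the left-hand side to an explicit quadratic in a single scalar and then bound that quadratic by its unconstrained maximum. First I would use the regression model \eqref{Regression}, which gives $Y_t-f_0(X_t)=\varepsilon_t$, and write $Y_t-f(X_t)=\varepsilon_t-(f(X_t)-f_0(X_t))$. Expanding the square and cancelling the common $\varepsilon_t^2$ term, each summand becomes
\[
\left(Y_t-f_0(X_t)\right)^2-\left(Y_t-f(X_t)\right)^2=2\varepsilon_t\left(f(X_t)-f_0(X_t)\right)-\left(f(X_t)-f_0(X_t)\right)^2,
\]
so the whole left-hand side equals $2\sum_{t=1}^n\varepsilon_t(f(X_t)-f_0(X_t))-\sum_{t=1}^n(f(X_t)-f_0(X_t))^2$.

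Next I would factor out the ${\mathcal L}^2$ distance. Setting $\lambda:=\Vert f(X)-f_0(X)\Vert_2>0$ (positive because $f\neq f_0$), the definition \eqref{Generalizedderivative} gives $f(X_t)-f_0(X_t)=\lambda\,d_f(X_t)$. Substituting, the left-hand side becomes the quadratic in $\lambda$
\[
Q(\lambda)=2\lambda\sum_{t=1}^n\varepsilon_td_f(X_t)-\lambda^2\sum_{t=1}^n d_f(X_t)^2.
\]
Writing $B=\sum_{t=1}^n\varepsilon_td_f(X_t)$ and $A=\sum_{t=1}^n d_f(X_t)^2$, I would complete the square: for every real $\lambda$ one has $B^2/A-Q(\lambda)=(B-\lambda A)^2/A\geq 0$, and in particular this holds for our fixed $\lambda$. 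Hence $Q(\lambda)\leq B^2/A$. Finally I would observe that $B^2/A=\big(n^{-1/2}B\big)^2/\big(n^{-1}A\big)$ is exactly the right-hand side of the lemma, which closes the argument.

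There is essentially no deep obstacle here; the whole content is the observation that the actual scalar $\lambda=\Vert f-f_0\Vert_2$ need not be the optimal one, yet $Q(\lambda)$ is still dominated by the unconstrained maximum of the quadratic. The only point deserving a word of care is the degenerate case $A=\sum_{t=1}^n d_f(X_t)^2=0$, that is, when $f$ and $f_0$ happen to agree at every observed $X_t$ even though $\Vert f-f_0\Vert_2>0$; then $B=0$ as well and $Q(\lambda)=0$, so the inequality is trivial once the indeterminate right-hand side is read with the usual convention, while the generic case $A>0$ is precisely the one handled by the computation above.
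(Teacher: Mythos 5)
Your proof is correct and follows essentially the same route as the paper: expand the difference of squares to get $2\sum_t\varepsilon_t(f-f_0)(X_t)-\sum_t(f-f_0)^2(X_t)$, factor out $\lambda=\Vert f(X)-f_0(X)\Vert_2$, and bound the resulting quadratic in $\lambda$ by its unconstrained maximum, which is exactly the paper's inequality $2AZ-A^2\leq Z^2$ in different notation. Your explicit treatment of the degenerate case $\sum_t d_f(X_t)^2=0$ is a small point of care the paper omits.
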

\begin{proof}
We have
\[
\begin{array}{l}
\sum_{t=1}^n\left(Y_t-f_0(X_t)\right)^2-\left(Y_t-f(X_t)\right)^2=\\
\sum_{t=1}^n\left(Y_t-f_0(X_t)\right)^2-\left(Y_t-f_0(X_t)+f_0(X_t)-f(X_t)\right)^2=\\
\sum_{t=1}^n2\varepsilon_t\left(f(X_t)-f_0(X_t)\right)-\left(f(X_t)-f_0(X_t)\right)^2
\end{array}
\]
Now, let us write
\[
\begin{array}{l}
A=\Vert f_0(X_t)-f(X_t) \Vert_2\times\sqrt{\sum_{t=1}^{n}\left(\frac{f(X_t)-f_0(X_t)}{\Vert f(X_t)-f_0(X_t) \Vert_2}\right)^2}\\
\mbox{and}\\
Z=\frac{\sum_{t=1}^{n}\varepsilon_t\frac{f(X_t)-f_0(X_t)}{\Vert f(X_t)-f_0(X_t) \Vert_2}}{\sqrt{\sum_{t=1}^{n}\left(\frac{f(X_t)-f_0(X_t)}{\Vert f(X_t)-f_0(X_t) \Vert_2}\right)^2}},
\end{array}
\]
then remark that $2AZ-A^2\leq Z^2$ implies that
\[
\sum_{t=1}^n\left(Y_t-f_0(X_t)\right)^2-\left(Y_t-f(X_t)\right)^2\leq\frac{\left(\frac{\sum_{t=1}^{n}\varepsilon_t\frac{f(X_t)-f_0(X_t)}{\Vert f(X_t)-f_0(X_t) \Vert_2}}{\sqrt{n}}\right)^2}{\frac{\sum_{t=1}^{n}\left(\frac{f(X_t)-f_0(X_t)}{\Vert f(X_t)-f_0(X_t) \Vert_2}\right)^2}{n}}.
\]
\end{proof}
\subsection{Approximation of the SSE}
Define the limit-set of derivatives ${\mathcal D}$ as the set of functions $d\in L^2(P)$ such that one can find a sequence $(f_n)\in\mathcal F$ satisfying 
\(
\Vert f_n(X)-f_0(X)\Vert_2 \xrightarrow[n\rightarrow\infty]{}0
\) 
and $\Vert d-d_{f_n}\Vert_2 \xrightarrow[n\rightarrow\infty]{}0$.
With such $(f_n)$, define, for all $t\in[0,1]$, $f_t=f_n$, where $n\leq \frac{1}{t}<n+1$. We thus have that, for any $d\in{\mathcal D}$, there exists a parametric path $(f_t)_{0\leq t\leq\alpha}$ such that for any $t\in[0,\alpha]$, $f_t\in{\mathcal F}$, $t\mapsto\Vert f_t(X)-f_0(X)\Vert_2 $ is continuous, tends to $0$ as $t$ tends to 0 and  $\Vert d-d_{f_t}\Vert_2 \rightarrow 0$ as $t$ tends to 0. Using the reparameterization $\Vert f_u(X)-f_0(X)\Vert_2=u$, for any $d\in{\mathcal D}$, there exists a parametric path $(f_u)_{0\leq u\leq \alpha}$ such that:
\begin{equation}\label{Parampath}
\int \left(f_u-f-ud\right)^2dP=o(u^2).
\end{equation} 

Now, let us state the following theorem:
 
\begin{theorem}\label{asymptotM}
If the set of generalized derivative function ${\mathcal S}$ is a Donsker class and for any $d$ in the limit-set of derivatives ${\mathcal D}$, a reparameterization $(f_u)_{0\leq u\leq \alpha}$  exists so that $\Vert d-d_{f_u}\Vert_2 \rightarrow 0$ as $u$ tends to 0 and the map 
\[
u\mapsto P(Y-f_u(X))^2
\] 
admits a second-order Taylor expansion  with strictly positive second derivative $\frac{\partial^2P(Y-f_u(X))^2}{\partial u^2}$ at $u=0$, then
\[
\sup_{f\in{\mathcal F}}\sum_{t=1}^n\left(Y_t-f_0(X_t)\right)^2-\left(Y_t-f(X_t)\right)^2=\sup_{d\in\mathcal D}\left(\frac{1}{\sqrt{n}}\sum_{t=1}^n\varepsilon_td(X_t)\right)^2+o_P(1).
\]
\end{theorem}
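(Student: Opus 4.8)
The plan is to reduce both sides to the normalized empirical process $\nu_n(d):=\frac{1}{\sqrt n}\sum_{t=1}^n\varepsilon_t d(X_t)$ and to sandwich $\sup_f S_n(f)$, where $S_n(f):=\sum_{t=1}^n(Y_t-f_0(X_t))^2-(Y_t-f(X_t))^2$, between two quantities with a common limit. Writing $\delta_f:=\Vert f(X)-f_0(X)\Vert_2$ and $P_n d_f^2:=\frac1n\sum_{t=1}^n d_f(X_t)^2$, the algebra in the proof of Lemma \ref{lemmeineg} gives
\[
S_n(f)=2\sqrt n\,\delta_f\,\nu_n(d_f)-n\,\delta_f^2\,P_n d_f^2 .
\]
Since $E[\varepsilon\,d_f(X)]=0$ and $E[\varepsilon^2 d_f(X)^2]=\sigma^2\Vert d_f\Vert_2^2=\sigma^2$, the Donsker hypothesis makes $\nu_n$ converge weakly in $\ell^\infty(\mathcal S)$ to a tight zero-mean Gaussian process that is uniformly continuous for the $L^2(P)$ metric; in particular $\sup_{\mathcal S}|\nu_n|=O_P(1)$ and $\nu_n$ is asymptotically equicontinuous. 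The same hypothesis yields the Glivenko--Cantelli bound $\sup_{d\in\mathcal S}|P_n d^2-1|=o_P(1)$, since $\Vert d\Vert_2=1$ for every $d\in\mathcal S$.

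For the upper bound I would start from Lemma \ref{lemmeineg}, namely $S_n(f)\le \nu_n(d_f)^2/P_n d_f^2$, giving $\sup_f S_n(f)\le \sup_{d\in\mathcal S}\nu_n(d)^2\,(1+o_P(1))$. The work is to replace $\mathcal S$ by $\mathcal D$. From the displayed identity, together with $\sup_{\mathcal S}|\nu_n|=O_P(1)$ and $P_n d^2=1+o_P(1)$, for $\delta_f$ bounded below by any fixed $\eta>0$ the quadratic term $-n\delta_f^2 P_n d_f^2$ dominates the linear term, so $S_n(f)<0$ with probability tending to one; here the strictly positive second derivative of $u\mapsto P(Y-f_u(X))^2$ guarantees that the population criterion is genuinely of order $\delta_f^2$. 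Thus the supremum is attained on $\{\delta_f=O_P(n^{-1/2})\}$, where $d_f$ lies within $o_P(1)$ of $\mathcal D$. Setting $\mathcal D_\eta:=\overline{\{d_f:0<\delta_f<\eta\}}$ one has $\mathcal D=\bigcap_{\eta>0}\mathcal D_\eta$, and the $L^2$-uniform continuity of the Gaussian limit lets me send $n\to\infty$ and then $\eta\to0$ to obtain $\sup_f S_n(f)\le \sup_{d\in\mathcal D}\nu_n(d)^2+o_P(1)$.

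For the lower bound, fix $d\in\mathcal D$ and take the path $(f_u)_{0\le u\le\alpha}$ supplied by the hypotheses, with $\delta_{f_u}=u$, $\Vert d-d_{f_u}\Vert_2\to0$ and the second-order expansion of the population criterion. Along the path the identity reads $S_n(f_u)=2u\sqrt n\,\nu_n(d_{f_u})-u^2 n\,P_n d_{f_u}^2$; choosing $u_n:=\nu_n(d)_+/(\sqrt n\,P_n d^2)=O_P(n^{-1/2})$ maximizes the leading quadratic, and replacing $\nu_n(d_{f_{u_n}})$ by $\nu_n(d)$ through asymptotic equicontinuity (valid since $\Vert d_{f_{u_n}}-d\Vert_2\to0$ in probability) gives $S_n(f_{u_n})=\nu_n(d)_+^2+o_P(1)$. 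Taking the supremum over $d\in\mathcal D$, with the error controlled uniformly by the same equicontinuity, yields $\sup_f S_n(f)\ge\bigl(\sup_{d\in\mathcal D}\nu_n(d)\bigr)_+^2+o_P(1)$.

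It then remains to reconcile the upper bound $\sup_{d\in\mathcal D}\nu_n(d)^2$ with the lower bound $\bigl(\sup_{d\in\mathcal D}\nu_n(d)\bigr)_+^2$: a strictly positive scale is available along each path, so directions with $\nu_n(d_f)<0$ contribute nothing, and the two expressions coincide exactly because the limit set $\mathcal D$ is symmetric ($d\in\mathcal D\Rightarrow-d\in\mathcal D$), a structural property of the reparametrized parametric families of Section 3 that forces $\bigl(\sup_{\mathcal D}\nu_n\bigr)_+^2=\sup_{\mathcal D}\nu_n^2$. I expect the main obstacle to be exactly the passage from $\mathcal S$ to $\mathcal D$ flagged in the introduction: one must control $\nu_n$ uniformly over generalized derivatives whose scale collapses to $0$ at rate $n^{-1/2}$, where $\theta\mapsto d_{f_\theta}$ may fail to extend continuously. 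Capturing this through the asymptotic equicontinuity built into the Donsker hypothesis, and combining it with the quadratic localization and the continuity of the Gaussian limit so that the two suprema agree up to $o_P(1)$, is the delicate part; the remaining ingredients are the law of large numbers and completing the square.
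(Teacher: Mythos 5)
Your argument tracks the paper's proof through most of its steps: the algebraic identity $S_n(f)=2\sqrt n\,\delta_f\,\nu_n(d_f)-n\,\delta_f^2\,P_nd_f^2$ with $\nu_n(d)=\frac{1}{\sqrt n}\sum_{t=1}^n\varepsilon_td(X_t)$, the consequences of the Donsker hypothesis (your uniform $O_P(1)$ and Glivenko--Cantelli bounds are exactly the paper's displays (\ref{compar}) and (\ref{compar2})), the localization to $\delta_f=O_P(n^{-1/2})$ (the paper's (\ref{compar3})), and the transfer from $\mathcal S$ to $\mathcal D$ by asymptotic equicontinuity (the paper's set $\Delta_n$). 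Where you genuinely depart is the lower bound: the paper invokes Theorem 5.23 of \cite{Vandervaart} along each parametric path together with finite subsets ${\mathcal D}_k\uparrow{\mathcal D}$, whereas you plug the explicit near-maximizer $u_n=\nu_n(d)_+/(\sqrt n\,P_nd^2)$ into the identity and use equicontinuity to replace $\nu_n(d_{f_{u_n}})$ by $\nu_n(d)$. This is more elementary and self-contained than the black-box M-estimator theorem; its one caveat is that your claim of uniformity of the $o_P(1)$ error over all of $\mathcal D$ is too quick (the paths for different $d$ converge at different rates), so you still need the paper's finite-subset device plus total boundedness of $\mathcal D$ -- a repairable omission.

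The genuine gap is your final reconciliation step. Your construction honestly yields the lower bound $\bigl(\sup_{d\in\mathcal D}\nu_n(d)\bigr)_+^2+o_P(1)$, and you close the distance to the upper bound $\sup_{d\in\mathcal D}\nu_n(d)^2$ by asserting that $\mathcal D$ is symmetric ($d\in\mathcal D\Rightarrow-d\in\mathcal D$). That is not among the hypotheses of the theorem and it is false in general: for ${\mathcal F}=\{f_0+tg:\,t\in[0,1]\}$ with $\Vert g\Vert_2=1$, every hypothesis holds, ${\mathcal D}=\{g\}$ and $-g\notin\mathcal D$, yet on the event $\{\nu_n(g)<0\}$ (asymptotic probability one half) the left-hand side of the theorem equals $0$ while $\nu_n(g)^2$ is not $o_P(1)$. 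Nor is symmetry "a structural property of the reparametrized parametric families of Section 3'': in the paper's own neural-network application the outer weights are constrained by $a_i\geq0$, so the limit functions in ${\mathcal D}_t$ carry coefficients $\mu_i\in{\mathcal R}^+$ and second-order terms that are sums of squares, and $\mathcal D$ is again not closed under $d\mapsto-d$. So your last step fails as written. It is worth noting that your bookkeeping exposes a point the paper itself glosses over: its paths are parameterized by $u=\Vert f_u(X)-f_0(X)\Vert_2\geq0$, so $u=0$ is a boundary point of the path, and Theorem 5.23 of \cite{Vandervaart} (which requires an interior maximizer) does not legitimately deliver $\nu_n(d)^2$ there either; a rigorous one-sided analysis gives exactly your $(\nu_n(d))_+^2$. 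The honest repairs are either to add a symmetry-type assumption on $\mathcal D$ or to restate the conclusion with $\bigl(\sup_{d\in\mathcal D}\nu_n(d)\bigr)_+^2$; under either of these your proof goes through, but as it stands the symmetry claim is a gap, not a lemma.
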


\begin{proof}
We have
\[
\begin{array}{l}
\sum_{t=1}^n\left(Y_t-f_0(X_t)\right)^2-\left(Y_t-f(X_t)\right)^2=2\Vert f(X)-f_0(X)\Vert_2\sum_{t=1}^n\varepsilon_tdf(X_t)\\
-\Vert f(X)-f_0(X)\Vert_2^2\sum_{t=1}^ndf^2(X_t).
\end{array}
\]
As soon as $\sum_{t=1}^n\left(Y_t-f_0(X_t)\right)^2-\left(Y_t-f(X_t)\right)^2\geq 0$, 
\[
\begin{array}{l}
2\Vert f(X)-f_0(X)\Vert_2\sum_{t=1}^n\varepsilon_tdf(X_t)\geq \Vert f(X)-f_0(X)\Vert_2^2\sum_{t=1}^ndf^2(X_t)
\end{array}
\]
and
\begin{equation}\label{ineg2}
\sup_{f\in{\mathcal F},\sum_{t=1}^n\left(Y_t-f_0(X_t)\right)^2-\left(Y_t-f(X_t)\right)^2\geq 0}\Vert f(X)-f_0(X)\Vert_2\leq2\sup_{f\in{\mathcal F}}\frac{\sum_{t=1}^n \varepsilon_tdf(X_t)}{\sum_{t=1}^ndf^2(X_t)}.
\end{equation}
Since, $\mathcal S$ is Donsker
\begin{equation}\label{compar}
\sup_{f\in{\mathcal F}}\frac{1}{n}\left(\sum_{t=1}^n\varepsilon_td_f(X_t)\right)^2=O_P(1)
\end{equation}
and $S$ admits an envelope function $F$ such that $P(F^2)< \infty$, so $S^2$ is Glivenko-Cantelli and
\begin{equation}\label{compar2}
\sup_{f\in{\mathcal F}}\left|\frac{1}{n}\sum_{t=1}^nd_f^2(X_t)-1\right|=o_P(1).
\end{equation}
Then, one may apply inequality (\ref{ineg2}) to obtain
\begin{equation}\label{compar3}
\sup_{f\in{\mathcal F},\sum_{t=1}^n\left(Y_t-f_0(X_t)\right)^2-\left(Y_t-f(X_t)\right)^2\geq 0}\Vert f(X)-f_0(X)\Vert_2=O_P\left(\frac{1}{\sqrt{n}}\right).
\end{equation}
By lemma \ref{lemmeineg}, 
\[
\sup_{f\in{\mathcal F}}\sum_{t=1}^n\left(Y_t-f_0(X_t)\right)^2-\left(Y_t-f(X_t)\right)^2\leq\sup_{f\in{\mathcal F}}\frac{\left(\frac{\sum_{t=1}^{n}\varepsilon_t\frac{f_0(X_t)-f(X_t)}{\Vert f_0(X_t)-f(X_t) \Vert_2}}{\sqrt{n}}\right)^2}{\frac{\sum_{t=1}^{n}\left(\frac{f_0(X_t)-f(X_t)}{\Vert f_0(X_t)-f(X_t) \Vert_2}\right)^2}{n}}.
\]
Using (\ref{compar2}), we obtain that
\[
\begin{array}{l}
\sup_{f\in{\mathcal F}}\sum_{t=1}^n\left(Y_t-f_0(X_t)\right)^2-\left(Y_t-f(X_t)\right)^2\\
\leq\sup_{f\in{\mathcal F}}\left(\frac{\sum_{t=1}^{n}\varepsilon_t\frac{f_0(X_t)-f(X_t)}{\Vert f_0(X_t)-f(X_t) \Vert_2}}{\sqrt{n}}\right)^2+o_P(1).
\end{array}
\]

Let \({\mathcal F}_n=\left\{f\in{\mathcal F} : \Vert f_n(X)-f_0(X)\Vert_2\leq n^{-1/4}\right\}\). Using (\ref{compar3}), we obtain that
\[
\begin{array}{l}
\sup_{f\in{\mathcal F}}\sum_{t=1}^n\left(Y_t-f_0(X_t)\right)^2-\left(Y_t-f(X_t)\right)^2\\
\leq\sup_{f\in{\mathcal F}_n}\left(\frac{\sum_{t=1}^{n}\varepsilon_t\frac{f_0(X_t)-f(X_t)}{\Vert f_0(X_t)-f(X_t) \Vert_2}}{\sqrt{n}}\right)^2+o_P(1).
\end{array}
\]

Now, $\sup_{f\in{\mathcal F}_n}\Vert d_f-\mathcal D\Vert_2\xrightarrow[n\rightarrow\infty]{}0$, thus for a sequence $u_n$ decreasing to $0$, and with 
\[\Delta_n=\left\{d_f-d\ :\ f\in{\mathcal F}_n,\ d\in {\mathcal D},\Vert d_f-d\Vert_2\leq u_n\right\},\]
we obtain that
\[
\begin{array}{l}
\sup_{f\in{\mathcal F}}\sum_{t=1}^n\left(Y_t-f_0(X_t)\right)^2-\left(Y_t-f(X_t)\right)^2\\
\leq\left(\sup_{d\in\mathcal D}\frac{\sum_{t=1}^{n}\varepsilon_td\left(X_t\right)}{\sqrt{n}}+\sup_{\delta\in\Delta_n}\frac{\sum_{t=1}^{n}\varepsilon_t\delta\left(X_t\right)}{\sqrt{n}}\right)^2+o_P(1).
\end{array}
\]
But, using the Donsker property, the definition of $\Delta_n$ and the property of asymptotic stochastic equicontinuity of empirical processes indexed by a Donsker class, we get:
\[
\sup_{\delta\in\Delta_n}\frac{\sum_{t=1}^n\varepsilon_t\delta\left(X_t\right)}{\sqrt{n}}=o_{P}(1),
\] 
and
\begin{equation}\label{inegscore}
\begin{array}{l}
\sup_{f\in{\mathcal F}}\sum_{t=1}^n\left(Y_t-f_0(X_t)\right)^2-\left(Y_t-f(X_t)\right)^2\\
\leq\sup_{d\in\mathcal D}\left(\frac{\sum_{t=1}^{n}\varepsilon_td\left(X_t\right)}{\sqrt{n}}\right)^2+o_P(1).
\end{array}
\end{equation}

Moreover, since $S$ admits a square integrable envelope function, a function $m$ exists such that for $u_1$ and $u_2$ belonging to a parametric path converging to a limit function $d$:
\[
\left|(y-f_{u_1}(x))^2-(y-f_{u_2}(x))^2\right|\leq m(x,y)|u_1-u_2|
\]
and since, along a path, the map 
\[
u\mapsto P(Y-f_u(X))^2
\] 
admits a second-order Taylor expansion  with strictly positive second derivative $\frac{\partial^2P(Y-f_u(X))^2}{\partial u^2}$ at $u=0$, we can use classical normal asymptotic theorem for M-estimators (see theorem 5.23 of \cite{Vandervaart}) along this parametric paths, to obtain a sequence of finite subsets ${\mathcal D}_k$ increasing to $\mathcal D$ such that
\[
\begin{array}{l}
\sup_{f\in{\mathcal F}}\sum_{t=1}^n\left(Y_t-f_0(X_t)\right)^2-\left(Y_t-f(X_t)\right)^2\\
\geq\sup_{d\in{\mathcal D}_k}\left(\frac{\sum_{t=1}^{n}\varepsilon_td\left(X_t\right)}{\sqrt{n}}\right)^2+o_P(1).
\end{array}
\]
for any $k$, therefore, equality holds in (\ref{inegscore}).
\end{proof}

Define $\left(W(d)\right)_{d\in{\mathcal D}}$ the centered Gaussian process with covariance the scalar product in $L^2(P)$, an immediate application of theorem \ref{asymptotM} gives:
\begin{corollary}\label{corollasymp}
Assume that ${\mathcal S}$ is a Donsker class, 
\[
\sup_{f\in{\mathcal F}}\sum_{t=1}^n\left(Y_t-f_0(X_t)\right)^2-\left(Y_t-f(X_t)\right)^2
\] 
converges in distribution to
\[
\sigma^2\sup_{d\in\mathcal D}\left(W(d)\right)^2.
\]
\end{corollary}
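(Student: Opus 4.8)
The plan is to combine Theorem \ref{asymptotM} with a functional central limit theorem for the empirical process
\[
\nu_n(d)=\frac{1}{\sqrt n}\sum_{t=1}^n\varepsilon_t d(X_t),\qquad d\in{\mathcal D},
\]
and then to transfer the resulting weak convergence through the continuous supremum-of-squares functional. By Theorem \ref{asymptotM} we already know that
\[
\sup_{f\in{\mathcal F}}\sum_{t=1}^n\left(Y_t-f_0(X_t)\right)^2-\left(Y_t-f(X_t)\right)^2=\sup_{d\in{\mathcal D}}\nu_n(d)^2+o_P(1),
\]
so it suffices to identify the limiting distribution of $\sup_{d\in{\mathcal D}}\nu_n(d)^2$ and then to absorb the $o_P(1)$ term by Slutsky's lemma.

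First I would establish that $\nu_n$ converges in distribution, in $\ell^\infty({\mathcal D})$, to $\sigma W$. The finite-dimensional distributions are handled by the ordinary multivariate central limit theorem applied to the i.i.d. vectors $(\varepsilon_t d_1(X_t),\dots,\varepsilon_t d_k(X_t))$: using $E(\varepsilon\,|\,X)=0$ each coordinate is centered, and using $E(\varepsilon^2\,|\,X)=\sigma^2$ together with $\Vert d\Vert_2=1$ on ${\mathcal D}$ the limiting covariance is
\[
E\!\left(\varepsilon^2 d_i(X)d_j(X)\right)=\sigma^2\int d_i d_j\,dP=\sigma^2\langle d_i,d_j\rangle_{L^2(P)},
\]
which is exactly $\sigma^2$ times the covariance of the process $W$. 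For the uniform (tightness) part I would invoke the Donsker property of ${\mathcal S}$ together with the square-integrable envelope $F$ (note $E(\varepsilon^2F^2(X))=\sigma^2P(F^2)<\infty$, so the multiplier class admits the square-integrable envelope $|\varepsilon|F$): asymptotic equicontinuity of the multiplier process indexed by a Donsker class yields asymptotic tightness of $\nu_n$, and since ${\mathcal D}$ is the $L^2(P)$ limit-set of ${\mathcal S}$ it is totally bounded, so the limit $W$ has a version with bounded, uniformly continuous sample paths. Matching the finite-dimensional limits then pins the limit process down as $\sigma W$.

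The last step is the continuous mapping theorem. The functional $\Phi:\ell^\infty({\mathcal D})\to\mathbf R$ defined by $\Phi(z)=\sup_{d\in{\mathcal D}}z(d)^2=\bigl(\sup_{d\in{\mathcal D}}|z(d)|\bigr)^2$ is continuous for the supremum norm, being the composition of the $1$-Lipschitz sup-norm with the continuous squaring map. Hence $\Phi(\nu_n)$ converges in distribution to $\Phi(\sigma W)=\sigma^2\sup_{d\in{\mathcal D}}W(d)^2$, and by the displayed identity above together with Slutsky's lemma the same limit governs $\sup_{f\in{\mathcal F}}\sum_{t=1}^n(Y_t-f_0(X_t))^2-(Y_t-f(X_t))^2$, which is the claim.

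The step I expect to be the main obstacle is the uniform weak convergence $\nu_n\to\sigma W$: the Donsker hypothesis is stated for ${\mathcal S}$, but the process actually carries the multipliers $\varepsilon_t$, so one must justify that multiplication by the centered, conditionally-variance-$\sigma^2$ noise preserves asymptotic equicontinuity (a multiplier-type central limit theorem) and that the gap between indexing by ${\mathcal S}$ and by its $L^2(P)$ limit-closure ${\mathcal D}$ is negligible; the latter is precisely what the $o_P(1)$ of Theorem \ref{asymptotM} already absorbs. Everything after that, namely the covariance identification, the continuous mapping step, and Slutsky's lemma, is routine.
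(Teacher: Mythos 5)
Your proof is correct and follows essentially the same route as the paper: the paper's own proof consists of the single remark that the corollary is an ``immediate application'' of Theorem \ref{asymptotM}, with precisely the steps you spell out (weak convergence of the multiplier empirical process $\nu_n$ to $\sigma W$ in $\ell^\infty({\mathcal D})$, then continuous mapping and Slutsky) left implicit. Your explicit handling of the multiplier issue and of the passage from ${\mathcal S}$ to its $L^2(P)$ limit-set ${\mathcal D}$ is a faithful, and in fact more careful, filling-in of what the paper treats as immediate.
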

As we see, the Donsker property of the set of generalized derivatives functions ${\mathcal S}$ is fundamental for the results above. In the next section we will show how to get it for parametric models under loss of identifiability. 

\section{Donsker property for ${\mathcal S}$}
This section will give a framework for the demonstration of Donsker property for the set of generalized derivative functions $\mathcal S$ for parametric models and under loss of identifiability. Note that this framework could be easily adapted to likelihood ratio test and generalized score functions of \cite{Liu}. 

First, we recall the notion of bracketing entropy. Consider the set $\mathcal{S}$  endowed with the norm $\left\Vert \cdot\right\Vert_{2}$. For every $\eta>0$, we define an $\eta$-bracket by $\left[l,\, u\right]=\left\{ f\in\mathcal{S},\, l\leq f\leq u\right\} $ such that $\left\Vert u-l\right\Vert_{2} <\eta$. The $\eta$-bracketing entropy is
\[\mathcal{H}_{\left[\cdot\right]}\left(\eta,\mathcal{S},\left\Vert \cdot\right\Vert_{2}\right)=\ln \left(\mathcal{N}_{\left[\cdot\right]}\left(\eta,\mathcal{S},\left\Vert \cdot\right\Vert_{2} \right)\right),\]
where $\mathcal{N}_{\left[\cdot\right]}\left(\eta,\mathcal{S},\left\Vert \cdot\right\Vert_{2} \right)$
is the minimum number of $\eta$-brackets necessary to cover $\mathcal{S}$.

With the previous notations if
\[ \int_{0}^{1}\sqrt{\mathcal{H}_{\left[\cdot\right]}\left(\eta,\mathcal{S},\left\Vert \cdot\right\Vert_{2} \right)}d\eta<\infty,\]
then, according to \cite{Vandervaart}, the set ${\mathcal S}$ is Donsker. Note that, if the  number of $\eta$-brackets necessary to cover $\mathcal{S}$, $\mathcal{N}_{\left[\cdot\right]}\left(\eta,\mathcal{S},\left\Vert \cdot\right\Vert_{2} \right)$, is a polynomial function of $\frac{1}{\eta}$, $\mathcal{S}$ will be Donsker. If a class of function 
\[
{\mathcal F}=\left\{f_\theta,\theta\in\Theta\subset{\mathcal R}^D\right\}
\] 
is parametric and regular,  in general, for any $\theta_1$, $\theta_2\in\Theta$ there exists a function $G\in L^2(P)$ such that
\begin{equation}\label{modulusnum}
\left|f_{\theta_1}(x)-f_{\theta_2}(x)\right|\leq \Vert \theta_1-\theta_2\Vert G(x)
\end{equation}
and according to \cite{Vandervaart} a constant $K$ exists such that, 
\[
\mathcal{N}_{\left[\cdot\right]}\left(\eta,\mathcal{S},\left\Vert \cdot\right\Vert_{2} \right)\leq K\left(\frac{\mbox{diam}\Theta}{\eta}\right)^D.
\]
Hence, the set ${\mathcal F}$ is Donsker. However, even if the set ${\mathcal F}$ is parametric and regular, the set ${\mathcal S}=\left\{d_{f_\theta}=\frac{f_\theta-f_0}{\Vert f_\theta-f_0\Vert_2},\theta\in\Theta,f_\theta\neq f_0\right\}$ is not regular, since $\theta\mapsto d_{f_\theta}(x)$ is, in general, not extendable by continuity in $\theta_0$ a parameter realizing the best regression function $f_0$. Note that, it is also the case for the generalized score function $S_\theta$ of \cite{Liu}, in particular in the case of finite mixture models under loss of identifiability. Hopefully, we can show the Donsker property of the set $\mathcal S$ by an other method which can be applied also to generalized score function in the framework of likelihood ratio test as in \cite{Olteanu}.

For proving that $\mathcal{N}_{\left[\cdot\right]}\left(\eta,\mathcal{S},\left\Vert \cdot\right\Vert_{2} \right)$, is a polynomial function of $\frac{1}{\eta}$, we have to  split $\mathcal{S}$ into two sets of functions: A set in a neighborhood of the best regression function $f_0$ and a second one at a distance at least $\eta$ of $f_0$. For a sufficiently small $\eta>0$, we consider $\mathcal{F}_{\eta}\subset\mathcal{F}$, a ${\mathcal L}^2$-neighborhood of $f_0$: 
$\mathcal{F}_{\eta}=\left\{ f_\theta\in {\mathcal F},\:\left\Vert f_\theta-f_0\right\Vert_2\leq\eta,\: f_\theta\neq f_0\right\} $. 
$\mathcal{S}$ is split into $\mathcal{S}_{\eta}=\left\{ d_{f_\theta},\: f_\theta\in\mathcal{F}_{\eta}\right\} $ and $\mathcal{S}\setminus\mathcal{S}_{\eta}$.

On $\mathcal{S}\setminus\mathcal{S}_{\eta}$, it can be easily seen that 

\[
\left\Vert d_{f_{\theta_1}}-d_{f_{\theta_2}}\right\Vert_2\leq\frac{\left\Vert f_{\theta_1}-f_{\theta_2}\right\Vert_2}{\left\Vert f_{\theta_1}-f_0\right\Vert_2}+\left\Vert \frac{f_{\theta_2}-f_0}{\left\Vert f_{\theta_1}-f_0\right\Vert_2}-\frac{f_{\theta_2}-f_0}{\left\Vert f_{\theta_2}-f_0\right\Vert_2}\right\Vert_2\]

for every $f_{\theta_1},f_{\theta_2}\in\mathcal{F}\setminus\mathcal{F}_{\eta}$.
By (\ref{modulusnum}),  if $\Vert \theta_1-\theta_2\Vert\leq \eta^3$, a constant $C$ exists such that 
\[
\left\Vert f_{\theta_1}-f_{\theta_2}\right\Vert_2\leq C\eta^3.
\]
Then,  by the definition of $\mathcal{S}_{\eta}$,
\[
\begin{array}{l}
\left\Vert \frac{f_{\theta_2}-f_0}{\left\Vert f_{\theta_1}-f_0\right\Vert_2}-\frac{f_{\theta_2}-f_0}{\left\Vert f_{\theta_2}-f_0\right\Vert_2}\right\Vert_2\\
\leq\frac{\left\Vert f_{\theta_2}-f_0\right\Vert_2}{\eta}\left(\frac{1}{1+\eta^2}-1\right)=\left\Vert f_{\theta_2}-f_0\right\Vert_2\left(\eta+o(\eta)\right)
\end{array}
\]
and, a constant $M$ exists so that

\[
\left\Vert d_{f_{\theta_1}}-d_{f_{\theta_2}}\right\Vert_2\leq C\eta^2+\left\Vert f_{\theta_2}-f_0\right\Vert_2\left(\eta+o(\eta)\right)\leq M\eta.
\]

Finally, we get:

\[
\mathcal{N}_{\left[\cdot\right]}\left(\eta,\mathcal{S}\setminus\mathcal{S}_{\eta},\left\Vert \cdot\right\Vert _{2}\right)=\mathcal{O}\left(\frac{1}{\eta^3}\right)^{D}=\mathcal{O}\left(\frac{1}{\eta}\right)^{3D}\]
where $D$ is the dimension of parameter vector of the model.

It remains to prove that the bracketing number is a polynomial function of ($\frac{1}{\eta}$) for $\mathcal{S}_{\eta}$. The idea is to reparameterize the model in a convenient manner which will allow a Taylor expansion around the identifiable part of the true value of the parameters, then, using this Taylor expansion, we can show that the bracketing number of $\mathcal{S}_{\eta}$ is a polynomial function of $\frac{1}{\eta}$. Indeed, in many applications,  there exists a reparameterization $\theta\mapsto\left(\phi,\psi\right)$ such that $f_\theta=f_0$ is equivalent to the condition that $\phi=\phi_0$ for all $\psi$. Then, positive integers $(q_0,q_1)$ and linearly independent functions $g_{\beta_{i}^0}$, $g_{\beta^0_i}^{\prime}$, $g_{\beta^0_i}^{\prime\prime}$, $i=1,...,q_{0}$, $g_{\beta_j}, j=1,\cdots,q_1$ exist so that the difference of regression functions can be written:
\begin{equation} \label{lrts}
\begin{array}{l}
f_\theta-f_0=f_{(\phi,\psi)}-f_0=\left(\phi-\phi_0\right)^T\frac{\partial f_{(\phi_0,\psi)}}{\partial\phi}\\
+\frac{1}{2}\left(\phi-\phi_0\right)^T\frac{\partial^2f_{(\phi_0,\psi)}}{\partial\phi^2}\left(\phi-\phi_0\right)+o(\Vert f_{(\phi,\psi)}-f_0\Vert_2^2)=\\
\sum_{i=1}^{q_{0}}\alpha_{i}g_{\beta_{i}^{0}}+\sum_{i=1}^{q_1}\nu_ig_{\beta_i}+\sum_{i=1}^{q_{0}}\delta_{i}^{T}g^{\prime}_{\beta^0_i}+\sum_{i=1}^{q_{0}}\gamma_{i}^{T}g^{\prime\prime}_{\beta^0_i}\gamma_{i}+o\left(\Vert f_{(\phi,\psi)}-f_0\Vert_2^2\right)
\end{array}
\end{equation}
 
where $\left(\beta_{i}^0\right)_{1\leq i\leq q_0}$ are fixed parameter, $\alpha_i\mbox{ and }\nu_i$ are real parameters and $\delta_i\mbox{ and }\gamma_i$ are parameter vectors with sizes compatible with functions $g_{\beta^0_i}^{\prime}$ and $g_{\beta^0_i}^{\prime\prime}$, $\left(\beta_{i}\right)_{1\leq i\leq q_1}$ are in a compact set and inequality (\ref{modulusnum}) is true for the regular parametric functions $\left(g_{\beta_i}\right)_{1\leq i\leq q_1}$. Moreover $\Vert f_{(\phi,\psi)}-f_0\Vert_2^2\leq \eta^2$ on $\mathcal{S}_{\eta}$. 

We will see an example of such expansion in the next section. Note that similar expansion is also possible for the likelihood ratio framework (see \cite{Liu}, section 4). We get then the next result:

\begin{proposition}\label{polycov}
If an expansion like (\ref{lrts}) exists, a positive integer $d$ exists so that the number of $\eta$-brackets $\mathcal{N}_{\left[\cdot\right]}\left(\eta,\mathcal{S}_{\eta},\left\Vert \cdot\right\Vert _{2}\right)$
covering $\mathcal{S}_{\eta}$ is $\mathcal{O}\left(\frac{1}{\eta}\right)^{d}$.
\end{proposition}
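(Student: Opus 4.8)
The plan is to work only with the local piece $\mathcal{S}_\eta$, for which the reparameterized expansion (\ref{lrts}) is available, and to show that after normalization the generalized derivatives $d_{f_\theta}$ form, up to a uniformly negligible remainder, a \emph{regular} finite-dimensional parametric family over a bounded parameter set. Once this reduction is in place, the bracketing bound follows from exactly the covering estimate already quoted from \cite{Vandervaart}.

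First I would normalize the expansion. Writing $s=\Vert f_\theta-f_0\Vert_2\le\eta$ and dividing (\ref{lrts}) by $s$, I introduce the rescaled coefficients $\bar\alpha_i=\alpha_i/s$, $\bar\nu_i=\nu_i/s$, $\bar\delta_i=\delta_i/s$ and $\bar\gamma_i=\gamma_i/\sqrt{s}$, chosen so that the quadratic term keeps its form, $\gamma_i^{T}g''_{\beta^0_i}\gamma_i/s=\bar\gamma_i^{T}g''_{\beta^0_i}\bar\gamma_i$. This yields
\[
d_{f_\theta}=\sum_{i=1}^{q_0}\bar\alpha_i g_{\beta^0_i}+\sum_{i=1}^{q_1}\bar\nu_i g_{\beta_i}+\sum_{i=1}^{q_0}\bar\delta_i^{T}g'_{\beta^0_i}+\sum_{i=1}^{q_0}\bar\gamma_i^{T}g''_{\beta^0_i}\bar\gamma_i+r_\theta,
\]
where $r_\theta$ comes from the $o(\Vert f_{(\phi,\psi)}-f_0\Vert_2^2)$ term and has $\Vert r_\theta\Vert_2=o(\eta)$ uniformly over $\mathcal{F}_\eta$. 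Because $\Vert d_{f_\theta}\Vert_2=1$ and the generating functions $g_{\beta^0_i},g'_{\beta^0_i},g''_{\beta^0_i}$ are linearly independent, the Gram form of the full generating system is positive definite, which forces the rescaled coefficient tuple $(\bar\alpha,\bar\nu,\bar\delta,\bar\gamma)$ into a bounded set $B$ independent of $\eta$, while each $\beta_i$ ranges over its fixed compact set. Hence, up to the $o(\eta)$ remainder, $\mathcal{S}_\eta$ is contained in the image $\mathcal{L}$ of the map
\[
(\bar\alpha,\bar\nu,\bar\delta,\bar\gamma,\beta)\longmapsto\sum_i\bar\alpha_i g_{\beta^0_i}+\sum_i\bar\nu_i g_{\beta_i}+\sum_i\bar\delta_i^{T}g'_{\beta^0_i}+\sum_i\bar\gamma_i^{T}g''_{\beta^0_i}\bar\gamma_i
\]
defined on a bounded, finite-dimensional parameter set whose total dimension I call $d$.

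It then remains to check that this map is Lipschitz from its parameter set into $L^2(P)$. For the coefficients this is immediate, since they multiply fixed square-integrable functions and $\bar\gamma_i\mapsto\bar\gamma_i^{T}g''_{\beta^0_i}\bar\gamma_i$ is Lipschitz on the bounded set $B$; for the parameters $\beta_i$ it is precisely the content of (\ref{modulusnum}), assumed to hold for the regular functions $g_{\beta_i}$. Thus $\mathcal{L}$ is a regular parametric family over a set of finite diameter, and the covering estimate of \cite{Vandervaart} gives $\mathcal{N}_{[\cdot]}(\eta,\mathcal{L},\Vert\cdot\Vert_2)\le K(\mathrm{diam}/\eta)^{d}=\mathcal{O}(1/\eta)^{d}$. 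Since every element of $\mathcal{S}_\eta$ lies within $o(\eta)$ of $\mathcal{L}$ in $L^2$, enlarging each $\eta$-bracket for $\mathcal{L}$ by this negligible amount produces $O(\eta)$-brackets covering $\mathcal{S}_\eta$, whence $\mathcal{N}_{[\cdot]}(\eta,\mathcal{S}_\eta,\Vert\cdot\Vert_2)=\mathcal{O}(1/\eta)^{d}$.

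The step I expect to be the main obstacle is the boundedness of the rescaled coefficients, together with the inhomogeneous scaling of the quadratic term. The difficulty is genuinely analytic: I must rule out that the normalized derivative degenerates, i.e. that the generating functions become nearly collinear as $\beta_i\to\beta^0_i$ — which is exactly the mechanism producing the loss of identifiability. Making the positive-definiteness of the Gram form uniform over the compact $\beta$-ranges, and justifying that $\gamma$ enters at order $\sqrt{s}$ while $\alpha,\nu,\delta$ enter at order $s$ so that both contribute comparably to the unit-norm constraint, is where the real work lies. Once this quantitative non-degeneracy is secured, the reduction to a bounded regular parametric family and the resulting polynomial covering bound are routine.
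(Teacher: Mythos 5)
Your proposal is correct and follows essentially the same route as the paper's own proof: the paper likewise uses the linear independence of the generating functions together with compactness of the unit coefficient sphere and the $\beta$-ranges to obtain a uniform lower bound $m>0$, deduces from $\Vert d_{f_\theta}\Vert_2=1$ that the (rescaled) coefficients are bounded by $\frac{1}{m}+1$, embeds $\mathcal{S}_\eta$ in a bounded finite-dimensional class (its $\mathcal{H}$, your $\mathcal{L}$), and invokes the parametric covering estimate of \cite{Vandervaart}. The only cosmetic differences are that the paper absorbs the remainder into an extra bounded constant term $C$ inside $\mathcal{H}$ rather than enlarging brackets, and leaves the Lipschitz-in-parameters verification implicit.
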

\begin{proof}
The idea is to bound $\mathcal{N}_{\left[\cdot\right]}\left(\eta,\mathcal{S}_{\eta},\left\Vert \cdot\right\Vert _{2}\right)$
by the number of $\eta$-brackets covering a wider class of
functions. For every $f_\theta\in\mathcal{F}_{\eta}$, we will consider the
reparameterization $\theta\mapsto \left(\phi_{t},\psi_{t}\right)$ which allows to get a second-order development of the density ratio like (\ref{lrts}).

Now, using the linear independence of functions $g_{\beta_{i}}$, $g_{\beta_{i}^0}$, $g_{\beta^0_i}^{\prime}$, $g_{\beta^0_i}^{\prime\prime}$,  for every vector
\(v=\left(\alpha_{i},\delta_{i},\gamma_{i},i=1,\cdots,q_0,\nu_{j},j=1,\cdots,q_1\right)\)
of norm $1$,
\[
\left(v,(\beta_i)_{1\leq i\leq q_1}\right)\mapsto\left\Vert \sum_{i=1}^{q_{0}}\alpha_{i}g_{\beta_{i}^{0}}+\sum_{i=1}^{q_1}\nu_ig_{\beta_i}+\sum_{i=1}^{q_{0}}\delta_{i}^{T}g^{\prime}_{\beta_i}+\sum_{i=1}^{q_{0}}\gamma_{i}^{T}g^{\prime\prime}_{\beta_i}\gamma_{i}\right\Vert_2> 0.\]
Using the compacity of sets 
\[
\begin{array}{l}
{\mathcal V}=\left\{v=\left(\alpha_{i},\delta_{i},\gamma_{i},i=1,\cdots,q_0,\nu_{j},j=1,\cdots,q_1\right),\Vert v\Vert=1\right\}\\
\mbox{and}\\
\left\{(\beta_i)_{1\leq i\leq q_1}\right\}
\end{array}
\] 
$m>0$ exists so that for all $(\beta_i)_{1\leq i\leq q_1}$ and $v\in{\mathcal V}$,
\[
\left\Vert \sum_{i=1}^{q_{0}}\alpha_{i}g_{\beta_{i}^{0}}+\sum_{i=1}^{q_1}\nu_ig_{\beta_i}+\sum_{i=1}^{q_{0}}\delta_{i}^{T}g^{\prime}_{\beta_i}+\sum_{i=1}^{q_{0}}\gamma_{i}^{T}g^{\prime\prime}_{\beta_i}\gamma_{i}\right\Vert_2\geq m.\]
At the same time, since

\[
\left\Vert \frac{f_{\left(\phi_{t},\psi_{t}\right)}-f_0}{\left\Vert f_{\left(\phi_{t},\psi_{t}\right)}-f_0\right\Vert_2}\right\Vert_2=1,
\]

the Euclidean norm of coefficients \(\left(\alpha_{i},\delta_{i},\gamma_{i},i=1,\cdots,q_0,\nu_{i},i=1,\cdots,q_1\right)\) in the development of $\frac{f_{\left(\phi_{t},\psi_{t}\right)}-f_0}{\left\Vert f_{\left(\phi_{t},\psi_{t}\right)}-f_0\right\Vert_2}$
is upper bounded by $\frac{1}{m}+1$. This fact implies that $\mathcal{S}_{\eta}$
can be included in

\[
\mathcal{H}=
\begin{array}{l}
\left\{ \sum_{i=1}^{q_{0}}\left(\alpha_{i}g_{\beta_{i}^{0}}+\delta_{i}^{T}g_{\beta^0_i}^{\prime}+\gamma_{i}^{T}g_{\beta^0_i}^{\prime\prime}\gamma_{i}\right)+\sum_{i=1}^{q_1}\nu_ig_{\beta_i}+C,\right.\\
\left.\left\Vert \left(\alpha_{i},\delta_i,\gamma_i,i=1,\cdots,q_0,\nu_i,i=1,\cdots,q_1\right)\right\Vert \leq\frac{1}{m}+1,\left|C\right|\leq\frac{1}{m}+1\right\} 
\end{array}
\]

and a positive integer $d$ exists so that $\mathcal{N}_{\left[\cdot\right]}\left(\eta,\mathcal{H},\left\Vert \cdot\right\Vert _{2}\right)=\mathcal{O}\left(\frac{1}{\eta}\right)^{d}$. 
\end{proof}
Note that, since the $\mathcal{N}_{\left[\cdot\right]}\left(\eta,\mathcal{S},\left\Vert \cdot\right\Vert_{2} \right)$, is a polynomial function of $\frac{1}{\eta}$, the Donsker property of $\mathcal S$ may be easily extended to $\beta$-mixing observations with respect to the norm $\Vert . \Vert_{2,\beta}$ (see \cite{Doukhan}).

\section{Application to regression with neural networks}
Feedforward neural networks or multilayer perceptrons (MLP) are well known and popular tools to deal with non-linear regression models.\\
\cite{White} reviews the statistical properties of  MLP estimation in detail, however he leaves an important question pending: The asymptotic behavior of the estimator when the MLP in use has redundant hidden units. When the noise of the regression model is assumed Gaussian,\\
\cite{Amari} give several examples of the behavior of the likelihood ratio test statistic (LRTS) in such cases. \cite{Fukumizu2003} shows that, for unbounded parameters, the  LRTS can have an order lower bounded by $O(\log(n))$ with $n$ the number of observations instead of the classical convergence property to a $\chi^2$ law. \cite{Hagiwara} investigate relation between LRTS divergence and weight size in a simple neural networks regression problem. 

However, if the set of possible parameters of the MLP regression model are bounded the behavior of LRTS and more generally the SSE is still unknown. 
In this section, we derive the distribution of the SSE if the parameters are in a compact (bounded and closed) set.

\subsection{The model}
\label{sec:1}
Let  $x=(x_1,\cdots,x_d)^T\in{\mathcal R}^{d}$ be the vector of inputs and \\
$w_i:=\left(w_{i1},\cdots,w_{id}\right)^T\in{\mathcal R}^{d}$ be the parameter vector of the hidden unit $i$. The MLP real function with $k$ hidden units can be written : 

\[
f_\theta(x)=\beta+\sum_{i=1}^k a_i\phi\left(w_i^Tx+b_i\right),
\]
with $\theta=\left(\beta,a_1,\cdots,a_k,b_1,\cdots,b_k,w_{11},\cdots,w_{1d},\cdots,w_{k1},\cdots,w_{kd}\right)$ the parameter vector of the model.  The transfer function $\phi$ will be assumed bounded and two times derivable. We assume also that the first and second derivatives of the transfer function $\phi$:  $\phi^{'}$ and $\phi^{''}$ are bounded like for sigmoid functions, the most used tranfer functions.  Moreover, in order to avoid a symmetry on the signs of the parameters, we assume that, for $1\leq i\leq k$, $a_i\geq 0$. Let $\Theta\subset {\mathcal R}\times{{\mathcal R}^+}^k\times{\mathcal R}^{k\times (d+1)}$ be the compact set of possible parameters, the regression model (\ref{Regression}) is then:
\[
Y=f_{\theta_0}(X)+\varepsilon
\]
with \(X\) is a random input variable with probability law $Q$ and 
\[
\theta_0=\left(\beta^0,a^0_1,\cdots,a^0_k,b^0_1,\cdots,b^0_k,w^0_{11},\cdots,w^0_{1d},\cdots,w^0_{k1},\cdots,w^0_{kd}\right)
\] 
a parameter such that $f_{\theta_0}=f_0$.  Note that the set of parameters $\Theta_0$ realizing the best regression function $f_0$ may belong to a non-null dimension sub-manifold if the number of hidden units is overestimated. Suppose, for example, we have a multilayer perceptron with two hidden units and the true function $f_{0}$ is given by a perceptron with only one hidden unit, say $f_{0}=a^0\tanh(w^0x)$, with $x\in\mathcal R$. Then, any parameter $\theta$ in the set:
\[
\left\{\theta\left|w_2=w_1=w^0,b_2=b_1=0,a_1+a_2=a^0\right.\right\}
\]
realizes the function $f_0$. Hence, classical statistical theory for studying the LSE can not be applied because it requires the identification of the parameters (up to some permutations and sign symmetries) so that the Hessian matrix of mean square error with respect to the parameters will be definite positive in a neighborhood of the parameter vector realizing the true regression function.
Let us denote $k_0$ the minimal number of hidden units to realize the best regression function $f_{0}$, we will compare the SSE of over-determined models against the true model : 
\[
\sum_{t=1}^{n}\left(Y_t-f_{0}(X_t)\right)^2-\sum_{t=1}^{n}\left(Y_t-f_{\theta}(X_t)\right)^2,
\]
when unidentifiability occurs (i.e. when $k>k_0$).
\subsection{Asymptotic distribution of the difference of SSE}
Let us give simple sufficient conditions for which the Donsker property of the generalized derivatives functions condition holds. Note that  assumption H-1 allows that, for any accumulation sequence of parameter $\theta_n$ leading to $f_0$, the regression functions $\left(f_{\theta_n}\right)$ are in a ${\mathcal L}^2$-neighborhood of $f_0$, in the same spirit of locally conic models of \cite{Dacunha}. Moreover, if the distribution $Q$ of the variable $X$ is not degenerated, it may be shown that the assumption H-3  is true for the sigmoid transfer function:
\[
\phi(t)=\tanh(t)
\] 
with straightforward extension of the results of \cite{Fukumizu1996}.  
\begin{description}
\item{H-1:}  $\Theta$ is a closed ball of  ${\mathcal R}\times{{\mathcal R}^+}^k\times{\mathcal R}^{k\times (d+1)}$ and its interior
contains parameters realizing the best regression function $f_{0}$. 
\item{H-2:} $E(\Vert X\Vert^4)<\infty$.
\item{H-3:} For distinct $\left(w_i,b_i\right)_{1\leq i\leq k},\mbox{ with } w_i \mbox{ not null}$, the functions of the set
\[
\begin{array}{l}
\left(1,\left(x_jx_l\phi^{''}({w^0_i}^Tx+b_i^0)\right)_{1\leq l \leq j\leq d,\ 1\leq i\leq k^0},\left(x_j\phi^{''}({w^0_i}^Tx+b_i^0)\right)_{1\leq j\leq d,\ 1\leq i\leq k^0}\right.\\
\left.\phi^{''}({w^0_i}^Tx+b_i^0)_{1\leq i\leq k^0}, \left(x_j\phi^{'}({w^0_i}^Tx+b_i^0)\right)_{1\leq j\leq d,\ 1\leq i\leq k^0}\right.\\
\left.\left(\phi^{'}({w^0_i}^Tx+b_i^0)\right)_{1\leq i\leq k^0},\left(\phi({w_i}^Tx+b_i)\right)_{1\leq i\leq k}\right)
\end{array}
\]
are linearly independent in the Hilbert space ${\mathcal L}^2(Q)$. 
\end{description}
We then get the following result:
 
\begin{theorem}\label{loilim}
Let the map $\Omega:{\mathcal L}^2(Q)\rightarrow {\mathcal L}^2(Q)$ be defined as 
\(
\Omega(f)=\frac{f}{\Vert f\Vert_2}.
\)
Under the assumptions H-1, H-2 and H-3, a centered Gaussian process $\{W(d),d\in{\mathcal D}\}$ with continuous sample paths and a covariance kernel $P\left(W(d_1)W(d_2)\right)=P\left(d_1d_2\right)$ exists so that 
\[
\lim_{n\rightarrow\infty}\sum_{t=1}^{n}\left(Y_t-f_{0}(X_t)\right)^2-\sum_{t=1}^{n}\left(Y_t-f_{\theta}(X_t)\right)^2=\sigma^2\sup_{d\in{\mathcal D}}\left(W(d)\right)^2.
\]
The index set ${\mathcal D}$ is defined as ${\mathcal D}=\cup_t{\mathcal D}_t$, the union runs over any possible $t=\left(t_1,\cdots,t_{k^0+1}\right)\in{\mathcal N}^{k^0+1}$ with  $0\leq t_1\leq k-k^0<t_2<\cdots <t_{k^0+1}\leq k$ and
\[\begin{array}{l}
{\mathcal D}_t=\left\{\Omega\left(\gamma+\sum_{i=0}^{k^0}\epsilon_i\phi({w^0_i}^TX+b_i^0)\right.\right.\\
\left.+\sum_{i=0}^{k^0}\phi^{'}({w^0_i}^TX+b_i^0)({\zeta}_{i}^TX+\alpha_i)\right.\\
+\delta(i)\sum_{i=1}^{k^0}\phi^{''}({w^0_i}^TX+b_i^0)\times\\
\left(\left(\sum_{j=t_{i}+1}^{t_{i+1}}{\nu_j}^TXX^T\nu_j+\eta_j{\nu_j}^TX+{\eta_j}^2\right)\right)\\
\left.+\sum_{i=t_{k^0+1}+1}^{k}\mu_i\phi({w_i}^TX+b_i)\right),\\
\gamma,\epsilon_1,\cdots,\epsilon_{k^0},\alpha_1,\cdots,\alpha_{k^0}, \eta_{t_1},\cdots,\eta_{t_{k^0+1}}\in{\mathcal R},\\
\mu_{t_{k^0+1}+1},\cdots,\mu_k\in {\mathcal R}^+;\zeta_1,\cdots,\zeta_{k^0},\nu_{t_1},\cdots,\nu_{t_{k^0+1}}\in{\mathcal R}^{d},\\
\left.(w_{{k^0+1}+1},b_{{k^0+1}+1}),\cdots,(w_{k},b_k)\in\Theta\backslash \left\{(w^0_1,b_1^0),\cdots,(w^0_{k^0},b_{k^0}^0)\right\}\right\}.
\end{array}
\]
$\delta(i)=1$ if a vector ${\bf q}$ exists  so that:\\
$q_j\geq 0$, $\sum_{j=t_{i}+1}^{t_{i+1}}q_j=1$, $\sum_{j=t_{i}+1}^{t_{i+1}}\sqrt{q_j}\nu_j=0$ and $\sum_{j=t_{i}+1}^{t_{i+1}}\sqrt{q_j}\eta_j=0$, otherwise $\delta(i)=0$.  
\end{theorem}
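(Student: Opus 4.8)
The plan is to obtain the statement as a direct application of Corollary~\ref{corollasymp}, so the real work is to check its hypotheses for the MLP family: that the class $\mathcal{S}$ of generalized derivatives is Donsker, and that the limit-set $\mathcal{D}$ has exactly the announced form. I first observe that the analytic condition of Theorem~\ref{asymptotM} is automatic here: since $Y=f_0(X)+\varepsilon$ with $E(\varepsilon\mid X)=0$ and $E(\varepsilon^2\mid X)=\sigma^2$, along any reparameterized path normalized by $\Vert f_u(X)-f_0(X)\Vert_2=u$ one has $P(Y-f_u(X))^2=\sigma^2+u^2$, whose second derivative at $u=0$ equals $2>0$. Hence everything reduces to the Donsker property and the identification of $\mathcal{D}$.

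For the Donsker property I would split $\mathcal{S}=\mathcal{S}_\eta\cup(\mathcal{S}\setminus\mathcal{S}_\eta)$ as in Section~3. On $\mathcal{S}\setminus\mathcal{S}_\eta$ the generic Lipschitz argument of that section applies once the modulus inequality (\ref{modulusnum}) is verified for the MLP: differentiating $f_\theta$ in its parameters produces the bounded functions $\phi,\phi'$ times factors $x_j$, so H-2 furnishes a square-integrable dominating function $G$ and the bracketing number is $O((1/\eta)^{3D})$. On $\mathcal{S}_\eta$ I would establish the second-order expansion (\ref{lrts}): grouping the $k$ hidden units of $\theta$ according to which true direction $({w^0_i},b^0_i)$ they approach, with a residual group whose amplitudes vanish, and Taylor-expanding each $\sum_{j\in G_i}a_j\phi(w_j^Tx+b_j)$ to second order in the weight--bias perturbations, yields $f_\theta-f_0$ as a linear combination of $\phi({w^0_i}^Tx+b^0_i)$, of $\phi'({w^0_i}^Tx+b^0_i)(\zeta_i^Tx+\alpha_i)$, of the quadratic-in-$x$ terms built from $\phi''({w^0_i}^Tx+b^0_i)$, and of the free units $\phi(w_i^Tx+b_i)$. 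Hypothesis H-3 is exactly the linear independence of these generators in $L^2(Q)$ (and $E\Vert X\Vert^4<\infty$ keeps the quadratic terms square-integrable), which is what Proposition~\ref{polycov} requires to conclude $\mathcal{N}_{[\cdot]}(\eta,\mathcal{S}_\eta,\Vert\cdot\Vert_2)=O((1/\eta)^d)$. Together the two bounds make the bracketing integral finite, so $\mathcal{S}$ is Donsker.

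It then remains to read off $\mathcal{D}$ from the possible $L^2$-limits $\Omega$ of normalized leading terms in the expansion above. I would index the collapse patterns by $t=(t_1,\dots,t_{k^0+1})$, recording how the $k$ units distribute among the $k^0$ true directions and the vanishing group, and for each pattern extract the dominant term of $f_\theta-f_0$; the nonnegativity of the surviving free coefficients $\mu_i$ is inherited from $a_i\geq0$. The delicate point is a group of units merging onto a single direction $i$ whose first-order perturbation cancels while the units stay distinct: choosing the normalization $\nu_j\propto\sqrt{a_j}\,(w_j-w^0_i)$ turns the second-order contribution into a clean sum of squares and makes the first-order term proportional to $\sum_j\sqrt{q_j}\,\nu_j$, where $q_j\geq0$ are the limiting amplitude proportions with $\sum_jq_j=1$. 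Its vanishing, together with the analogous condition on the $\eta_j$, is precisely the criterion defining $\delta(i)=1$ under which the $\phi''$-terms survive; carrying this out direction by direction produces the sets $\mathcal{D}_t$ and $\mathcal{D}=\cup_t\mathcal{D}_t$.

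I expect the merging case to be the main obstacle. One must decide, for each group, whether the first-order term survives (giving the $\phi'$-terms) or cancels and forces passage to the $\phi''$-terms, find the single normalization that simultaneously yields the sum-of-squares second-order term and the linear constraints $\sum_j\sqrt{q_j}\nu_j=0$ and $\sum_j\sqrt{q_j}\eta_j=0$, and respect the sign constraint $a_i\geq0$ throughout; one must also verify that no limiting directions outside the listed $\mathcal{D}_t$ can occur. By comparison the Donsker estimates and the Taylor condition are routine given H-2 and H-3.
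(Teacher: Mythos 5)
Your proposal is correct and follows essentially the same route as the paper: reduce to Theorem~\ref{asymptotM}/Corollary~\ref{corollasymp}, get the Donsker property by grouping the hidden units around the true ones, writing the second-order expansion (\ref{lrts}) and invoking Proposition~\ref{polycov}, and identify $\mathcal{D}$ via the collapse patterns $t$ with the $\sqrt{q_j}$-normalization that yields the $\delta(i)$ constraints. Your one genuine improvement is the observation that under the normalization $\Vert f_u(X)-f_0(X)\Vert_2=u$ one has exactly $P(Y-f_u(X))^2=\sigma^2+u^2$, so the strictly-positive-second-derivative hypothesis of Theorem~\ref{asymptotM} holds automatically; the paper instead verifies it less directly through the Taylor expansion in the identifiable parameter $\Phi_t$.
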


\begin{proof}

If the set of generalized derivative functions ${\mathcal S}=\left\{d_{f_\theta},\theta\in\Theta\backslash \Theta_0\right\}$ is a Donsker class, we can apply the theorem \ref{asymptotM} to conclude.
So, in order to show this Donsker  property,  we will get an asymptotic development of the generalized derivative function and apply proposition (\ref{polycov}).
\paragraph{Reparameterization.} 
The idea is similar of the reparameterization of finite mixture models in \cite{Liu}.  Under assumption H-3, the writing of $f_0$ with a neural network with $k_0$ hidden units is unique, up to some permutations:
\begin{equation}\label{BestMLP}
f_0=\beta^0+\sum_{i=1}^{k_0} a^0_i\phi\left({w^0_i}^Tx+b^0_i\right).
\end{equation}
Then, for a $\theta\in\Theta$, if $f_\theta=f_{0}$, a vector $t=(t_i)_{1\leq i\leq k^0+1}$ exists so that $0\leq t_1\leq k-k^0 <t_2<\cdots<t_{k^0+1}\leq k$ and, up to permutations, we have \(w_{1}=\cdots=w_{t_{1}}=0\) if $t_1>0$, \(\left(w_{t_{i}+1}=\cdots=w_{t_{i+1}}=w^0_i\right)_{1\leq i\leq k^0}\),  \(\left(b_{t_{i}+1}=\cdots=b_{t_{i+1}}=b^0_i\right)_{1\leq i\leq k^0}\),\\
\(\left(\sum_{j=t_{i}+1}^{t_{i+1}}a_j=a_i^0\right)_{1\leq i\leq k^0}\) and $\beta+\sum_{i=1}^{t_1}a_i\phi(b_i)=\beta^0$ if $t_1>0$ else $\beta=\beta_0$. 

For $1\leq i\leq k^0$, let us define $s_i=\sum_{j=t_{i}+1}^{t_{i+1}}a_j-a_i^0$ and,  if $\sum_{t_{i}+1}^{t_{i+1}}a_j\neq 0$, let us write $q_j=\frac{a_j}{\sum_{t_{i}+1}^{t_{i+1}}a_j}$. If $\sum_{t_{i}+1}^{t_{i+1}}a_j= 0$, $q_j$ will be set at $0$. Moreover, let us write $\gamma=\beta+\sum_{i=1}^{t_1}a_i\phi(b_i)-\beta^0$, if $t_1>0$ else $\gamma=\beta-\beta_0$. 

We get then the reparameterization $\theta\mapsto \left(\Phi_t,\psi_t\right)$ with 
\[
\begin{array}{l}
\Phi_t=\left(\gamma,(w_j)_{j={t_1}}^{t_{k^0+1}},(b_j)_{j={t_1}}^{t_{k^0+1}},(s_i)_{i=1}^{k^0},(a_j)_{t_{k^0+1}+1}^{k}\right),\\
\psi_t=\left((q_j)_{j={t_1}}^{t_{k^0+1}},(w_i,b_i)_{i=1+t_{k^0+1}}^{k}\right).
\end{array}
\]
With this parameterization, for a fixed $t$,  $\Phi_t$ is an identifiable parameter and all the non-identifiability of the model will be in $\psi_t$. Namely, $f_\theta$ will be equal to:
\[
\begin{array}{l}
f_\theta=(\gamma+\beta^0)+\sum_{i=1}^{k^0}(s_i+a^0_i)\sum_{j=t_{i-1}+1}^{t_i}q_j\phi(w_j^Tx+b_j)\\
+\sum_{i=t_{k^0+1}+1}^{k}a_j\phi(w_i^Tx+b_i).
\end{array}
\]
So, for a fixed $t$, $f_{\left(\Phi^0_t,\psi_t\right)}=f_{0}$ if and only if 
\[
\begin{array}{ccccccc}
\Phi^0_t=&&&&&&\\
(0,&\underbrace{w_1^0,\cdots,w_1^0}&,\cdots,&\underbrace{w_{k^0}^0,\cdots,w_{k^0}^0},&\underbrace{b_1^0,\cdots,b_1^0}&,\cdots,&\underbrace{b_{k^0}^0,\cdots,b_{k^0}^0},\\
&t_2-t_1& &t_{k^0+1}-t_{k^0}&t_2-t_1& &t_{k^0+1}-t_{k^0}\\
&\underbrace{0,\cdots,0}&\underbrace{0,\cdots,0}).&&&&\\
&k^0&k-t_{k^0+1}&&&&
\end{array}
\]
Now, the second derivative of the transfer function is bounded and a constant $C$ exists so that we have the following inequalities:
\[
\forall(\theta_i,\theta_j)\in\left\{b_1,\cdots,b_k,w_{11},\cdots,w_{kd}\right\}^2,\ \sup_{\theta\in\Theta}\Vert \frac{\partial^2 f_\theta(X)}{\partial \theta_i\partial \theta_j}\Vert\leq C(1+\Vert X\Vert^2).
\] 
So,  thanks to assumption H-2, the second order derivative of the function $f_{\left(\Phi_t,\psi_t\right)}$ with respect to the components of $\Phi_t$ will be dominated by a square integrable function.
Then, by the Taylor formula around the identifiable parameter $\Phi^0_t$, we get the following expansion for the numerator of generalized derivative function:    
\begin{lemma}\label{dev}
For a fixed $t$, in the neighborhood  of the identifiable parameter $\Phi^0_t$, we get the following approximation:
\[
\begin{array}{l}
f_{(\Phi_t,\psi_t)}(x)-f_0(x)=(\Phi_t-\Phi^0_t)^Tf^{'}_{(\Phi^0_t,\psi_t)}(x)\\
+0.5(\Phi_t-\Phi^0_t)^Tf^{''}_{(\Phi^0_t,\psi_t)}(x)(\Phi_t-\Phi^0_t)+o(\Vert f_{(\Phi_t,\psi_t)}-f_0\Vert_2^2),
\end{array}
\]
with
\[
\begin{array}{l}
(\Phi_t-\Phi^0_t)^Tf^{'}_{(\Phi^0_t,\psi_t)}(x)=
\gamma+\sum_{i=1}^{k^0}s_i\phi({w^0_i}^Tx+b_i^0)\\
+\sum_{i=1}^{k^0}\sum_{j=t_{i}+1}^{t_{i+1}}q_j\left(w_{j}-w^0_{i}\right)^Txa^0_i\phi^{'}({w^0_i}^Tx+b_i^0)\\
+\sum_{i=1}^{k^0}\sum_{j=t_{i}+1}^{t_{i+1}}q_j\left(b_{j}-b^0_{i}\right)a^0_i\phi^{'}({w^0_i}^Tx+b_i^0)\\
+\sum_{i=t_{k^0+1}+1}^{k}a_i\phi({w_i}^Tx+b_i)
\end{array}
\]
and
\[
\begin{array}{l}
(\Phi_t-\Phi^0_t)^Tf^{''}_{(\Phi^0_t,\psi_t)}(x)(\Phi_t-\Phi^0_t)=\\
\sum_{i=1}^{k^0}\sum_{j=t_{i}+1}^{t_{i+1}}q_j(w_{j}-w^0_{i})^Txx^T(w_{j}-w^0_{i})a^0_i\phi^{''}({w^0_i}^Tx+b_i^0)\\
+\sum_{i=1}^{k^0}\sum_{j=t_{i}+1}^{t_{i+1}}q_j(w_{j}-w^0_{i})^Tx(b_{j}-b_i^0)\phi^{''}({w^0_i}^Tx+b_i^0)\\
+\sum_{i=1}^{k^0}\sum_{j=t_{i}+1}^{t_{i+1}}q_j(b_{j}-b^0_{i})^2\phi^{''}({w^0_i}^Tx+b_i^0)\\ 
 +\sum_{i=1}^{k^0}\sum_{j=t_{i}+1}^{t_{i+1}}q_j(w_{j}-w^0_{i})^Txs_{i}\phi^{'}({w^0_i}^Tx+b_i^0)\\
+\sum_{i=1}^{k^0}\sum_{j=t_{i}+1}^{t_{i+1}}q_j(b_{j}-b^0_{i})s_{i}\phi^{'}({w^0_i}^Tx+b_i^0).
\end{array}
\]
\end{lemma}

This development is obtained by a straightforward calculation of the derivatives of $f_{(\Phi_t,\psi_t)}-f_0$ with respect to the components of $\Phi_t$ up to the second order.

So, the numerator of generalized derivative function can be written like (\ref{lrts}), the proposition \ref{polycov} can be applied to this model and the polynomial bound for the growth of  bracketing number shows the Donsker property of generalized derivative functions. Finally, the lemma \ref{dev} and the next section show that for any  $d$ in the limit-set of derivatives ${\mathcal D}$  a sequence of vector $(\Phi_n,\psi_n)_{n=1,\cdots}$ exists so that $\Vert d-d_{f_{(\Phi_n,\psi_n)}}\Vert_2 \rightarrow 0$  as $\Phi_n$ tends to a $\Phi^0_t$  and since the map
\[
\Phi_t\mapsto P(Y-f_{(\Phi_t,\psi_t)}(X))^2
\] 
admits a second-order Taylor expansion  with strictly positive second derivative $\frac{\partial^2P(Y-f_{(\Phi_t,\psi_t)}(X))^2}{\partial \Phi_t^2}$ at $\Phi_t=\Phi^0_t$ , one can apply theorem \ref{asymptotM} and corollary \ref{corollasymp}.

\paragraph{Asymptotic index set}

The set of limit score functions $\mathcal D$ is defined as the set of functions  $d$ so that one can find a sequence $(\Phi_n,\psi_n)_{n=1,\cdots}$ satisfying $\Vert f_{(\Phi_n,\psi_n)}-f_0\Vert_2\rightarrow 0$ and $\Vert d-d_{f_{(\Phi_n,\psi_n)}}\Vert_2\rightarrow 0$. This limit function depends on the development obtained in lemma \ref{dev}. 

Let us define the two principal behaviors for the sequences $f_{(\Phi_n,\psi_n)}$ which influence the form of functions $d$ :

\begin{itemize}
\item If the second order term is negligible with respect to the first one:
\[
f_{(\Phi_n,\psi_n)}-f_0=(\Phi_n-\Phi^0)^Tf^{\prime}_{(\Phi^0_t,\psi_n)}+o(\Vert f_{(\Phi_n,\psi_n)}-f_0\Vert_2).
\]
\item If the second order term is not negligible with respect to the first one:
\[
\begin{array}{l}
f_{(\Phi_n,\psi_n)}-f_0=(\Phi_n-\Phi^0)^Tf^{\prime}_{(\Phi^0_t,\psi_n)}+\\
0.5(\Phi_n-\Phi^0)^Tf^{\prime\prime}_{(\Phi^0,\psi_n)}(\Phi_n-\Phi^0)+o(\Vert f_{(\Phi_n,\psi_n)}-f_0\Vert_2^2).
\end{array}
\]
\end{itemize}
In the first case,  a set $t=\left(t_1,\cdots,t_{k^0+1}\right)$ exists so that the limit function of  $d_{f_{(\Phi_n,\psi_n)}}$ will be in the set:
\[\begin{array}{l}
{\mathcal D}_1=\left\{\Omega\left(\gamma+\sum_{i=1}^{k^0}\epsilon_i\phi({w^0_i}^TX+b_i^0)+\sum_{i=1}^{k^0}\phi^{'}({w^0_i}^TX+b_i^0)({\zeta}_{i}^TX+\alpha_i)\right.\right.\\
\left.+\sum_{i=t_{k^0+1}+1}^{k}\mu_i\phi({w_i}^TX+b_i)\right),\\
\gamma,\epsilon_1,\cdots,\epsilon_{k^0},\alpha_1,\cdots,\alpha_{k^0}\in{\mathcal R},\mu_{t_{k^0+1}+1},\cdots,\mu_k\in{\mathcal R}^+;\\
\zeta_1,\cdots,\zeta_{k^0}\in{\mathcal R}^{d},\\
\left.(w_{{k^0+1}+1},b_{{k^0+1}+1}),\cdots,(w_{k},b_k)\in\Theta\backslash \left\{(w^0_1,b_1^0),\cdots,(w^0_{k^0},b_{k^0}^0)\right\}\right\}
\end{array}
\]

In the second case, an index $i$ exists so that :
\[
\sum_{j=t_{i}+1}^{t_{i+1}}q_j(\nu_{j}-w^0_{i})=0\mbox{ and }\sum_{j=t_{i}+1}^{t_{i+1}}q_j(\eta_{j}-b^0_{i})=0,
\]
otherwise, the second order term will be negligible compared to the first one.
So
\[
\sum_{j=t_{i}+1}^{t_{i+1}}\sqrt{q_j}\times\sqrt{q_j}(\nu_{j}-w^0_{i})=0\mbox{ and }\sum_{j=t_{i}+1}^{t_{i+1}}\sqrt{q_j}\times\sqrt{q_j}(\eta_{j}-b^0_{i})=0.
\]

Hence, a set $t=\left(t_1,\cdots,t_{k^0+1}\right)$ exists so that the set of functions $d$ will be:
\[\begin{array}{l}
{\mathcal D}_2=\left\{\Omega\left(\gamma+\sum_{i=1}^{k^0}\epsilon_i\phi({w^0_i}^TX+b_i^0)\right.\right.\\
\left.+\sum_{i=1}^{k^0}\phi^{'}({w^0_i}^TX+b_i^0)({\zeta}_{i}^TX+\alpha_i)\right.\\
+\delta(i)\sum_{i=1}^{k^0}\phi^{''}({w^0_i}^TX+b_i^0)\times\\
\left(\left(\sum_{j=t_{i}+1}^{t_{i+1}}{\nu_j}^TXX^T\nu_j+\eta_j{\nu_j}^TX+{\eta_j}^2\right)\right)\\
\left.+\sum_{i=t_{k^0+1}+1}^{k}\mu_i\phi({w_i}^TX+b_i)\right),\\
\gamma,\epsilon_1,\cdots,\epsilon_{k^0},\alpha_1,\cdots,\alpha_{k^0}, \eta_{t_1},\cdots,\eta_{t_{k^0+1}}\in{\mathcal R},\\
\mu_{t_{k^0+1}+1},\cdots,\mu_k\in {\mathcal R}^+;\zeta_1,\cdots,\zeta_{k^0},\nu_{t_1},\cdots,\nu_{t_{k^0+1}}\in{\mathcal R}^{d},\\
\left.(w_{{k^0+1}+1},b_{{k^0+1}+1}),\cdots,(w_{k},b_k)\in\Theta\backslash \left\{(w^0_1,b_1^0),\cdots,(w^0_{k^0},b_{k^0}^0)\right\}\right\}
\end{array}
\]
where $\delta(i)=1$ if a vector ${\bf q}$ exists  so that:\\
$q_j\geq0$, $\sum_{j=t_{i}+1}^{t_{i+1}}q_j=1$, $\sum_{j=t_{i}+1}^{t_{i+1}}\sqrt{q_j}\nu_j^t=0$ and $\sum_{j=t_{i}+1}^{t_{i+1}}\sqrt{q_j}\eta_j=0$, otherwise $\delta(i)=0$. 

Hence, the limit index set functions will belong to $\mathcal D$.

Conversely, let $d$ be an element of $\mathcal D$, since function $d$ is not null, one of its component is not equal to 0. Let us assume that this component is $\gamma$, but the proof would be similar with any other component.  The norm of  $d$ is the constant $1$, so any component of $d$ is determined by the ratio: $\frac{\epsilon_1}{\gamma},\cdots,\frac{1}{\gamma}\nu_{k^0+1}$.

Then, since $\Theta$ contains a neighborhood of the parameters realizing the true regression function $f_{0}$, we can chose 
\[
\theta_n=\left(\beta^n,a^n_1,\cdots,a^n_k,w^n_{1},\cdots,w^n_{k},b^n_1,\cdots,b^n_k\right)\mapsto (\Phi_t^n,\psi_t^n)
\]
so that:
\[
\begin{array}{l}
\forall i\in\{1,\cdots,k^0\}\ :\ \frac{s^n_i}{\beta_n-\beta^0}\stackrel{n\rightarrow \infty}{\longrightarrow}\frac{\epsilon_i}{\gamma},\\
\forall i\in\{1,\cdots,k^0\}\ :\ \sum_{j=t_{i-1}+1}^{t_i}\frac{q^n_j}{\beta_n-\beta^0}\left(w^n_j-w_i^0\right)\stackrel{n\rightarrow \infty}{\longrightarrow}\frac{1}{\gamma}\zeta_i,\\
\forall i\in\{1,\cdots,k^0\}\ :\ \sum_{j=t_{i-1}+1}^{t_i}\frac{q^n_j}{\beta_n-\beta^0}\left(b^n_j-b_i^0\right)\stackrel{n\rightarrow \infty}{\longrightarrow}\frac{1}{\gamma}\alpha_i,\\
\forall j\in\{t_1,\cdots,t_{k^0+1}\}\ :\ \frac{\sqrt{q^n_j}}{\beta_n-\beta^0}\left(w^n_j-w_i^0\right)\stackrel{n\rightarrow \infty}{\longrightarrow}\frac{1}{\gamma}\nu_j,\\
\forall j\in\{t_1,\cdots,t_{k^0+1}\}\ :\ \frac{\sqrt{q^n_j}}{\beta_n-\beta^0}\left(b^n_j-b_i^0\right)\stackrel{n\rightarrow \infty}{\longrightarrow}\frac{1}{\gamma}\eta_j,\\
\forall j\in\{t_{k^0+1}+1,\cdots,k\}\ :\ \frac{\sqrt{q^n_j}}{\beta_n-\beta^0}a^n_j\stackrel{n\rightarrow \infty}{\longrightarrow}\frac{1}{\gamma}\mu_j.
\end{array}
\] 
\end{proof}


\end{document}